\def\spose#1{\hbox to 0pt{#1\hss}}
\def\text #1{\hbox{\quad#1\quad}}
\def\nthinsp{\mskip -2   mu}
\def\superstar{^{\raise 0.5pt\hbox{$\nthinsp *$}}}
\def\SUPERSTAR{^{\raise 0.5pt\hbox{$*$}}}
\def\lamstarT {\lambda^{\raise 0.5pt\hbox{$\nthinsp *$}T}}
\def\hbar{\skew{4.2}\bar h}
		\def\bkE{{\rm I\kern-.17em E}}
		\def\bk1{{\rm 1\kern-.17em l}}
		\def\bkD{{\rm I\kern-.17em D}}
		\def\bkR{{\rm I\kern-.17em R}}
		\def\bkP{{\rm I\kern-.17em P}}
		\def\bkY{{\bf \kern-.17em Y}}
		\def\bkZ{{\bf \kern-.17em Z}}
		\def\beq{\begin{eqnarray}}
		\def\bc{\begin{center}}
		\def\be{\begin{enumerate}}
		\def\bi{\begin{itemize}}
		\def\bs{\begin{small}}
		\def\bS{\begin{slide}}
		\def\ec{\end{center}}
		\def\ee{\end{enumerate}}
		\def\ei{\end{itemize}}
		\def\es{\end{small}}
		\def\eS{\end{slide}}
		\def\eeq{\end{eqnarray}}
	\def\cp2problem#1#2#3#4{\fbox
		 {\begin{tabular*}{0.9\textwidth}
			{@{}l@{\extracolsep{\fill}}l@{\extracolsep{6pt}}l@{\extracolsep{\fill}}c@{}}
				#1 & & $#4 $ 
			\end{tabular*}}}
\def\z{\phantom 0}
		\renewcommand{\emph}[1]{\textbf{#1}}
		\def\bkE{{\rm I\kern-.17em E}}
		\def\bk1{{\rm 1\kern-.17em l}}
		\def\bkD{{\rm I\kern-.17em D}}
		\def\bkR{\mathbb{R}}
		\def\bkP{{\rm I\kern-.17em P}}
		\def\bkZ{{\bf{Z}}}
\newcommand {\beeq}[1]{\begin{equation}\label{#1}}
\newcommand {\eeeq}{\end{equation}}
\newcommand {\bea}{\begin{eqnarray}}
\newcommand {\eea}{\end{eqnarray}}
\def\texitem#1{\par\smallskip\noindent\hangindent 25pt
               \hbox to 25pt {\hss #1 ~}\ignorespaces}
\newtheorem{algorithm}{Algorithm}
\newtheorem{theorem}{Theorem}
\newtheorem{remark}{Remark}
\newtheorem{lemma}{Lemma}
\newtheorem{proposition}{Proposition}
\newtheorem{assumption}{Assumption}
\newcommand{\qedd}{\tag*{$\square$}}
\begin{document}
\allowdisplaybreaks
\title{A Projection-Based Algorithm  for Solving Stochastic Inverse Variational Inequality Problems}

\author{Zeinab Alizadeh \footnote{ Systems and Industrial Engineering, University of Arizona,
 Tucson, AZ 85721, USA.} \and Felipe Parra Polanco \footnotemark[1] \and Afrooz Jalilzadeh \footnotemark[1]
}

\date{}

\maketitle

\section*{ABSTRACT}
We consider a stochastic Inverse Variational Inequality (IVI) problem defined by a  continuous and co-coercive map over a closed and convex set. Motivated by the absence of performance guarantees for stochastic IVI, we present a variance-reduced projection-based gradient method. Our proposed method ensures an almost sure convergence of the generated iterates to the solution, and we establish a convergence rate guarantee. To verify our results, we apply the proposed algorithm to a network equilibrium control problem. 

\section{INTRODUCTION}
\label{sec:intro}
In recent years, the inverse variational inequality (IVI) problem \cite{he1999goldstein} has become an active research area within the field of mathematical optimization. The IVI problem is pertinent to numerous fields, including  transportation system operation, control
policies, and electrical power network management \cite{yang1997traffic,he2011inverse,scrimali2012inverse,he2018existence}. Depending on the specific application, factors such as the demand market, supply market, or transaction cost may exhibit stochastic behavior.
In this paper, we focus on Stochastic Inverse Variational Inequality (SIVI). In particular, let $X\subseteq \mathbb R^n$ be a nonempty closed convex set, and $F:X\to \mathbb R^n$ be a continuous nonlinear map. Consider the following SIVI problem: find $x^* \in \mathbb R^n$ such that $F(x^*) \in X$ and
\begin{align}\label{sIvI}
\langle y-F(x^*), x^* \rangle\geq 0, \quad \forall y\in X,
\end{align}
where $F(x)\triangleq \mathbb E[G(x,\xi)]$, $\xi: \Omega \to \mathbb R^d$, ${G}: X \times \mathbb R^d  \rightarrow
\mathbb{R}^n$, and the associated probability space is denoted by $(\Omega, {\cal F}, \mathbb{P})$.
The SIVI problem involves finding a solution that satisfies a set of inequalities for all variable $y$ lying within the set $X$. Indeed, SIVI in \eqref{sIvI} can be viewed as a classical Stochastic Variational Inequality (SVI) problem, i.e., $\langle y-x^*,H(x^*)\rangle\geq 0$ for any $y\in X$, where $H\triangleq F^{-1}$. However, $F^{-1}$ may not be available in some practical applications which necessitates developing a new set of schemes. 


While VIs \cite{YuriiNesterov2011DiscreteandContinuousDynamicalSystems,facchinei2007finite,malitsky2015projected} and SVIs \cite{jiang2008stochastic,koshal2012regularized,yousefian2017smoothing,jalilzadeh2019proximal} have been studied extensively over the last several decades, less is known about SIVIs. Projection-based algorithms to solve deterministic IVI problems have been developed by \cite{he2011inverse,he2018existence} and \cite{luo2014regularization}. Moreover, \cite{zou2016novel} introduced a neural network-based method to approximate the solutions to IVIs, however to the best of our knowledge there is no available algorithm with a convergence rate guarantee for solving the SIVI. In this paper, we introduce a variance-reduced projection-based algorithm for solving problem \eqref{sIvI} when operator $F$ is co-coercive, which is a weaker assumption than strong monotonicity plus Lipschitz continuity. We show that the proposed scheme produces a sequence that converges to the solution of \eqref{sIvI} almost surely, a convergence statement that was unavailable thus far for this class of problems. Then, we derive rate and complexity guarantees for the proposed algorithm.  

The remainder of the paper is organized as follows. In Section \ref{sec:pre}, we define our notations and state assumptions and some  technical lemmas that we use throughout the paper. In Section \ref{sec:method}, we propose the Variance-reduced Inverse Projected Gradient (VR-IPG) method to solve the SIVI problem \eqref{sIvI}, and its convergence analysis is stated in Section \ref{sec:rate}. Finally,  numerical experiments are presented in Section \ref{sec:numeric} to solve a network equilibrium control problem.

\section{PRELIMINARIES}\label{sec:pre}
In this section, first, we define important notations and then the main assumptions that we need for the convergence analysis are stated.

{\bf Notations.}  Throughout the paper, $\|x\|$ denotes the Euclidean vector norm, i.e., $\|x\|=\sqrt{x^Tx}$.  $\mathbf{P}_X [u]$ is the projection of $u$ onto the set $X$, i.e. $\mathbf{P}_X [u] = \mbox{argmin}_{z \in X} \| z-u \|$. $\mathbb E[x]$ is used to denote the expectation of a random variable $x$. Moreover, $e_n\in \mathbb R^n$ denotes the vector of ones and for a given $a\in \mathbb R^n$, $\mbox{diag}(a)\in \mathbb R^{n\times n}$ denotes a diagonal matrix whose main diagonal elements are $a$. 
\begin{assumption}\label{asump1}
Operator $F:X\rightarrow \mathbb R^n$ is a co-coercive map on $X$, i.e., there exists $\mu>0$ such that
\begin{align*}
\langle F(x)-F(y),x-y\rangle\geq \mu \|F(x)-F(y)\|^2, \quad \forall x,y\in X.
\end{align*} 

\end{assumption}
\begin{remark}
     It is worth noting that for a Lipschitz continuous map $F$, co-coercivity is a weaker assumption than strong monotonicity. In other words, a co-coercive map may not be strongly monotone, such as a constant mapping. However, a map that is both strongly monotone and Lipschitz continuous is co-coercive.
Moreover, co-coercive maps have some desirable properties. For example, co-coercivity is preserved under affine transformations. For instance, if $F$ is a co-coercive map, then $A^TF(Ax) + d$ is also co-coercive, where $A$ is a matrix and $d$ is a vector. This property highlights the versatility of co-coercivity in various settings. See \cite{zhu1996co} for more discussion about the properties of co-coercive maps.
\end{remark}

If $\mathcal F_{k}$ denotes the information history at epoch $k$, then we have the following requirements on the associated filtrations where $\bar w_{k,N_k} \triangleq \tfrac{1}{N_k}{\sum_{j=1}^{N_k} ( G(x_k,\xi_{j,k})-F(x_k))}$.
\begin{assumption}\label{assump_error}
 There exists $\nu>0$ such that $\mathbb E[\bar w_{k,N_{k}}\mid  \mathcal F_{k}]=0$ and $\mathbb E[\| \bar w_{k,N_{k}}\|^2\mid  \mathcal F_{k}]\leq \tfrac{\nu^2}{N_{k}}$  holds almost surely 
 for all $k$, 
where $\mathcal{F}_k \triangleq \sigma\{x_0, x_1, \hdots, x_{k-1}\}$.  
\end{assumption} 
In our analysis, the following technical lemma for projection mappings is used.
\begin{lemma}\label{proj}\cite{bertsekas2003convex}
\noindent Let $ X\subseteq \mathbb{R}^n $  be a nonempty closed and convex set. Then the followings hold:
(a) $\|\mathbf{P}_X [u]- \mathbf{P}_X [v]\| \leq \|u-v\| $ for all $ u,v \in \mathbb{R}^n$;
(b) $ (u-\mathbf{P}_X [u])^T(x-\mathbf{P} _X [u]) \leq 0 $ for all $u \in \mathbb{R}^n$ and $x \in X$.  
\end{lemma}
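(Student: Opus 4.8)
The plan is to reduce everything to the single variational inequality in part~(b): first show that $\mathbf{P}_X[u]$ is well defined, then prove (b) by a first-order optimality argument, and finally obtain the nonexpansiveness in (a) by applying (b) twice and using Cauchy--Schwarz.

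\emph{Well-definedness of the projection.} I would fix $u\in\mathbb R^n$, pick any $x_0\in X$, and note that the minimizer of $z\mapsto\|z-u\|^2$ over $X$ coincides with its minimizer over the nonempty compact convex set $X\cap\{z:\|z-u\|\le\|x_0-u\|\}$; a continuous function attains its minimum there, giving existence. Uniqueness follows from strict convexity of $z\mapsto\|z-u\|^2$: if $p_1\neq p_2$ both attained the minimum value $m$, then $\tfrac12(p_1+p_2)\in X$ would satisfy $\|\tfrac12(p_1+p_2)-u\|^2<\tfrac12\|p_1-u\|^2+\tfrac12\|p_2-u\|^2=m$, a contradiction. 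Hence $\mathbf{P}_X[u]$ is a single well-defined point.

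\emph{Part (b).} Set $p=\mathbf{P}_X[u]$ and fix $x\in X$. By convexity of $X$, the point $p+t(x-p)$ lies in $X$ for every $t\in[0,1]$, so minimality gives $\|p-u\|^2\le\|p+t(x-p)-u\|^2=\|p-u\|^2+2t\,(p-u)^T(x-p)+t^2\|x-p\|^2$. Cancelling $\|p-u\|^2$, dividing by $t>0$, and letting $t\downarrow 0$ yields $(p-u)^T(x-p)\ge 0$, i.e. $(u-p)^T(x-p)\le 0$, which is exactly (b).

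\emph{Part (a).} Let $p=\mathbf{P}_X[u]$ and $q=\mathbf{P}_X[v]$. Apply (b) at $u$ with the feasible point $q\in X$ to get $(u-p)^T(q-p)\le 0$, and apply (b) at $v$ with the feasible point $p\in X$ to get $(v-q)^T(p-q)\le 0$. Adding these two inequalities and rearranging gives $\|p-q\|^2\le (u-v)^T(p-q)$, and Cauchy--Schwarz then yields $\|p-q\|^2\le\|u-v\|\,\|p-q\|$; dividing by $\|p-q\|$ (the case $p=q$ being immediate) proves (a). There is no serious obstacle here: the only points requiring a little care are the existence/uniqueness of the projection and the limiting step $t\downarrow 0$ in the derivation of (b); everything else is algebraic manipulation.
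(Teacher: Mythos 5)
Your proof is correct: the existence/uniqueness argument, the first-order optimality derivation of (b), and the deduction of (a) by applying (b) at both $u$ and $v$ and invoking Cauchy--Schwarz are all sound. The paper itself gives no proof of this lemma --- it is quoted from \cite{bertsekas2003convex} --- and your argument is precisely the standard one found in that reference, so there is nothing to reconcile.
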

Moreover, to prove almost sure convergence of the iterates we use the following Robbins-Siegmund lemma (see Lemma 11 in  \cite{polyak1987introduction}). 
\begin{lemma}\label{a.s.}[Robbins-Siegmund]
    Let $v_k, u_k, \alpha_k, \beta_k$ be  nonnegative random variables and let
    $\mathbb E[v_{k+1}\mid \mathcal F_k]\leq (1+\alpha_k)v_k-u_k+\beta_k$, $\sum_{k=0}^\infty \alpha_k<\infty$ and $\sum_{k=0}^\infty u_k<\infty$ almost surely, where $\mathbb E[v_{k+1}\mid \mathcal F_k]$ denotes the conditional mathematical expectation for the given $v_0,\hdots,v_k,u_0,\hdots,u_k,\alpha_0,\hdots,\alpha_k,\beta_0,\hdots,\beta_k$. Then, $v_k\to v$ almost surely and $\sum_{k=0}^\infty u_k<\infty$ almost surely, where $v\geq 0$ is some random variable.
\end{lemma}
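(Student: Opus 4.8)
This is the classical Robbins--Siegmund almost-supermartingale lemma (the summability hypothesis being read, as is standard, as $\sum_k\beta_k<\infty$ a.s.), and I would derive it from the ordinary nonnegative supermartingale convergence theorem. The first move is to remove the multiplicative factor $1+\alpha_k$. Put $A_k\triangleq\prod_{j=0}^{k-1}(1+\alpha_j)$ with $A_0\triangleq1$; since $\alpha_k\ge0$ and $\sum_k\alpha_k<\infty$ a.s., the sequence $(A_k)$ is nondecreasing, $\mathcal F_{k}$-measurable, and converges a.s.\ to a finite limit $A_\infty\in[1,\infty)$ (because $\log A_\infty=\sum_j\log(1+\alpha_j)\le\sum_j\alpha_j$). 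Dividing the recursion by $A_{k+1}$ and setting $\vtilde_k\triangleq v_k/A_k$, $\utilde_k\triangleq u_k/A_{k+1}$, $\betatilde_k\triangleq\beta_k/A_{k+1}$ --- all still nonnegative and appropriately measurable --- yields the cleaner inequality $\mathbb E[\vtilde_{k+1}\mid\mathcal F_k]\le\vtilde_k-\utilde_k+\betatilde_k$ with $\sum_k\betatilde_k\le\sum_k\beta_k<\infty$ a.s.

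Next I would form the candidate supermartingale $s_k\triangleq\vtilde_k+\sum_{j=0}^{k-1}\utilde_j-\sum_{j=0}^{k-1}\betatilde_j$. A one-line computation from the displayed inequality gives $\mathbb E[s_{k+1}\mid\mathcal F_k]\le s_k$, so $(s_k)$ is an $(\mathcal F_k)$-supermartingale; but it need not be nonnegative, being only bounded below by $-\sum_{j=0}^\infty\betatilde_j$, a quantity that is merely almost surely finite, so the basic convergence theorem cannot be applied as is. To repair this I would localize: for each integer $m\ge1$ set $\tau_m\triangleq\inf\{k:\sum_{j=0}^k\betatilde_j>m\}$, a stopping time. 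Then $s_{k\wedge\tau_m}+m$ is a genuine nonnegative supermartingale (because $\sum_{j=0}^{\tau_m-1}\betatilde_j\le m$), so it converges a.s.; and on $\{\tau_m=\infty\}$ we have $s_{k\wedge\tau_m}=s_k$. Since $\sum_j\betatilde_j<\infty$ a.s., the events $\{\tau_m=\infty\}$ increase to a set of full probability, and therefore $s_k$ itself converges a.s.\ to a finite limit.

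Finally I would unwind the substitutions. Because $\sum_{j=0}^{k-1}\betatilde_j$ converges a.s., so does $s_k+\sum_{j=0}^{k-1}\betatilde_j=\vtilde_k+\sum_{j=0}^{k-1}\utilde_j$; here $\vtilde_k\ge0$ and the partial sums $\sum_{j=0}^{k-1}\utilde_j$ are nondecreasing, hence bounded above, hence convergent, which gives both $\sum_k\utilde_k<\infty$ a.s.\ and $\vtilde_k\to\vtilde_\infty\ge0$ a.s. Multiplying through by $A_k\to A_\infty$ then yields $v_k\to v\triangleq A_\infty\vtilde_\infty\ge0$ a.s., while $\sum_k u_k=\sum_k A_{k+1}\utilde_k\le A_\infty\sum_k\utilde_k<\infty$ a.s. The one genuinely delicate point is the localization in the middle paragraph: the natural supermartingale $s_k$ is neither nonnegative nor $L^1$-bounded a priori, and its lower bound is random, so the stopping-time truncation --- together with the observation that the events $\{\tau_m=\infty\}$ exhaust the sample space --- is exactly what makes the classical theorem applicable. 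Everything else is bookkeeping with the convergent product $(A_k)$.
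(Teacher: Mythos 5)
Your argument is correct and complete; note, though, that the paper does not prove this lemma at all --- it is quoted verbatim (as Lemma 11 of Polyak's book) and used as a black box, so there is no in-paper proof to compare against. Two points in your write-up deserve explicit approval. First, you correctly diagnose the typo in the hypothesis: as printed, the condition ``$\sum_{k=0}^\infty u_k<\infty$'' should read $\sum_{k=0}^\infty \beta_k<\infty$ (otherwise the statement is false, e.g.\ $v_k=k$, $u_k=\alpha_k=0$, $\beta_k=1$), and the standard form you prove is the one the authors actually invoke in Theorems 1 and 2. Second, your handling of the only delicate step is right: after discounting by $A_k=\prod_{j<k}(1+\alpha_j)$, the natural process $s_k=\vtilde_k+\sum_{j<k}\utilde_j-\sum_{j<k}\betatilde_j$ is a supermartingale whose lower bound $-\sum_j\betatilde_j$ is random, so the nonnegative supermartingale convergence theorem does not apply directly; localizing with $\tau_m=\inf\{k:\sum_{j\le k}\betatilde_j>m\}$ and observing that the events $\{\tau_m=\infty\}$ exhaust $\Omega$ is exactly the standard repair (the measurability of $\alpha_k,\beta_k,u_k$ with respect to $\mathcal F_k$, which your divisions and the stopping-time property rely on, is guaranteed by the conditioning convention in the statement). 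The unwinding at the end --- convergence of $\vtilde_k+\sum_{j<k}\utilde_j$ splitting into convergence of each piece because one is nonnegative and the other monotone, then multiplying back by $A_k\to A_\infty<\infty$ --- is sound. No gaps.
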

\section{PROPOSED METHOD}\label{sec:method}
In this section, we propose our algorithm for solving problem \eqref{sIvI}. First, we show that solving SIVI problem \eqref{sIvI} is equivalent to finding $x^*\in \mathbb{R}^n$ such that
\begin{equation}\label{opt-cond}
    F(x^*)=\mathbf{P}_X(F(x^*)-\eta x^*).
\end{equation} 
\begin{proposition}
    $x^*$ is a solution of problem \eqref{sIvI} if and only if $x^*$ is a solution of equation \eqref{opt-cond}.   
\end{proposition}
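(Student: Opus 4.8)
The plan is to establish both implications via the standard variational characterization of the Euclidean projection onto a nonempty closed convex set: for any $u\in\mathbb R^n$, a point $z\in X$ equals $\mathbf{P}_X[u]$ if and only if $(u-z)^T(y-z)\le 0$ for all $y\in X$. The nontrivial ``only if'' half of this is exactly Lemma \ref{proj}(b), and the ``if'' half is the textbook obtuse-angle characterization from \cite{bertsekas2003convex}. Throughout I will apply this with $u=F(x^*)-\eta x^*$ and $z=F(x^*)$, so that $u-z=-\eta x^*$, and I will use that the step-size parameter satisfies $\eta>0$.

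First I would show that a solution of \eqref{opt-cond} solves \eqref{sIvI}. If $F(x^*)=\mathbf{P}_X(F(x^*)-\eta x^*)$, then $F(x^*)\in X$ automatically, since it is the image of a projection onto $X$; this is precisely the feasibility requirement in \eqref{sIvI}. Applying Lemma \ref{proj}(b) with $u=F(x^*)-\eta x^*$ and an arbitrary $x=y\in X$ gives $(u-\mathbf{P}_X[u])^T(y-\mathbf{P}_X[u])\le 0$. Substituting $\mathbf{P}_X[u]=F(x^*)$ and $u-\mathbf{P}_X[u]=-\eta x^*$ yields $-\eta\langle x^*,\,y-F(x^*)\rangle\le 0$, and dividing by $\eta>0$ gives $\langle y-F(x^*),x^*\rangle\ge 0$ for every $y\in X$, which is \eqref{sIvI}.

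For the converse I would simply reverse the computation. Assume $x^*$ solves \eqref{sIvI}, so $F(x^*)\in X$ and $\langle y-F(x^*),x^*\rangle\ge 0$ for all $y\in X$. Multiplying by $\eta>0$ and rearranging gives $(-\eta x^*)^T(y-F(x^*))\le 0$ for all $y\in X$; that is, with $z=F(x^*)\in X$ and $u=F(x^*)-\eta x^*$, we have $(u-z)^T(y-z)\le 0$ for all $y\in X$. By the projection characterization above this says exactly $z=\mathbf{P}_X[u]$, i.e.\ $F(x^*)=\mathbf{P}_X(F(x^*)-\eta x^*)$, which is \eqref{opt-cond}.

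I do not anticipate a genuine obstacle: the argument is a direct manipulation of the obtuse-angle property of projections. The only points needing a little care are (i) recording that the feasibility condition $F(x^*)\in X$ is built into the fixed-point form, so the two problems are equivalent including this implicit constraint, and (ii) keeping the sign of $\eta$ and the orientation of the inequalities consistent when passing between the two forms. If one prefers not to invoke the ``if'' direction of the projection characterization, the converse can instead be argued by contradiction: if $F(x^*)\neq\mathbf{P}_X(F(x^*)-\eta x^*)$, then taking $y=\mathbf{P}_X(F(x^*)-\eta x^*)\in X$ in \eqref{sIvI} together with Lemma \ref{proj}(b) produces a strict inequality that contradicts \eqref{sIvI}.
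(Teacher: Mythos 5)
Your proof is correct and takes essentially the same route as the paper: the paper expresses \eqref{opt-cond} as $F(x^*)\in\mbox{argmin}_{y\in X}\|y-(F(x^*)-\eta x^*)\|^2$ and invokes the (necessary and sufficient) first-order optimality condition for this convex problem, which is precisely the obtuse-angle characterization of the projection that you use, and both arguments reduce the fixed-point equation to $\langle y-F(x^*),x^*\rangle\geq 0$ for all $y\in X$ together with $F(x^*)\in X$. Your explicit remark that Lemma \ref{proj}(b) supplies only one direction of that characterization, with the converse coming from the standard textbook equivalence, is a slightly more careful bookkeeping of the same argument.
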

\begin{proof}
    We start by rewriting the projection equation \eqref{opt-cond} which is equivalent to 
    $F(x^*)\in \mbox{argmin}_{y\in X}\|y-(F(x^*)-\eta x^*)\|^2$. We observe that $F(x^*)\in X$ and since the objective function is convex, the first-order optimality condition is equivalent to finding a global solution. Hence, $ \langle y-F(x^*),F(x^*)-(F(x^*)-\eta x^*) \rangle \geq 0$ for all $y\in X$. Therefore, $\langle {y-F(x^*), x^*} \rangle \geq 0$ for all $y\in X$ and $F(x^*)\in X$ which means that $x^*$ is the solution of problem \eqref{sIvI}.
\end{proof}
To address the stochastic nature of the problem, we utilize a stochastic approximation (SA) scheme, which has shown effectiveness in various stochastic computational problems. Conventional SA schemes rely on a single or fixed batch of samples to estimate the expectation. Our proposed approach is a projected gradient-based scheme where, in each iteration, $F(x_k)$ is substituted by a gradually increasing sample average using a batch size of $N_k$, i.e., $\frac{1}{N_k}\sum_{j=1}^{N_k}G(x_k,\xi_{j,k})$. A projection step generates an intermediate iteration $z_k$ and the new iterate $x_{k+1}$ is then generated by taking a direction along $-(\frac{1}{N_k}\sum_{j=1}^{N_k}G(x_k,\xi_{j,k})-z_k)$ with step-size $1/\eta_k$ from the current iterate point $x_k$. The outline of the proposed variance-reduced inverse projected gradient (VR-IPG) method is displayed in Algorithm \ref{alg1}.
Moreover, to measure how far the iterates are from the optimal solution, we can examine the degree to which the optimality condition in \eqref{opt-cond} is violated. Therefore, we define the gap function $H:X\times \mathbb R_+\to \mathbb R_+$ such that  $H(x,\eta)\triangleq \tfrac{1}{\eta}\left(F(x)-\mathbf{P}_X\left(F(x)-\eta x\right)\right)$ which will be used to analyze the convergence rate of the proposed method.

\begin{algorithm}[htbp]
\caption{Variance-reduced Inverse Projected Gradient (VR-IPG) method}
\label{alg1}
{\bf Input}: $x_0\in X$, $\{\eta_{k},N_k\}_{k\geq 0}\subseteq \mathbb{R}_+$; \\
{\bf for $k=0,\hdots T-1$ do}\\
\mbox{ }
$z_k=\mathbf{P}_{X}\left[\frac{\sum_{j=1}^{N_k}G(x_k,\xi_{j,k})}{N_k}-\eta_{k} x_k\right]$;\\
\mbox{ } $x_{k+1}= x_k-\frac{1}{\eta_{k}}\left(\frac{\sum_{j=1}^{N_k}G(x_k,\xi_{j,k})}{N_k}-z_k\right)$; \\
{\bf end for}
\end{algorithm}

\subsection{Convergence Analysis}\label{sec:rate}

To obtain the convergence results of Algorithm \ref{alg1}, we state a one-step analysis in Lemma \ref{lem:bound}. To this end, we first establish the following technical result that will be utilized in the proof of Lemma \ref{lem:bound}. Related proofs are provided in the appendix.
\begin{lemma}\label{first bound}
Define $H(x_k,\eta_k)\triangleq \tfrac{1}{\eta_k}\left(F(x_k)-\mathbf{P}_X\left(F(x_k)-\eta_kx_k\right)\right)$ and let $x^* \in X$ be a solution point for problem \eqref{sIvI}. For any $x \in \mathbb R ^n$, the following holds
\begin{align*}
    (x-x^*)^T H(x, \eta) \geq \big( 1-\tfrac{1}{4\mu \eta}\big) \| H(x, \eta) \|^2.
\end{align*}
\end{lemma}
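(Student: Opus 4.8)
The plan is to combine the projection characterization of the auxiliary point $z := \mathbf{P}_X(F(x)-\eta x)$ with the defining inequality \eqref{sIvI} for $x^*$, and then absorb the resulting cross terms using co-coercivity together with a Young-type inequality. Throughout, write $\eta H := \eta H(x,\eta) = F(x)-z$, so that $z = F(x)-\eta H$ with $z\in X$, and note that since $x^*$ solves \eqref{sIvI} we also have $F(x^*)\in X$.

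First I would apply Lemma \ref{proj}(b) with $u = F(x)-\eta x$ (whose projection onto $X$ is $z$) and with the feasible point $F(x^*)\in X$. Using $F(x)-\eta x - z = \eta(H-x)$, this yields $(x-H)^T(F(x^*)-z)\ge 0$, and substituting $z = F(x)-\eta H$ gives
\[
(x-H)^T\bigl(F(x^*)-F(x)+\eta H\bigr)\ \ge\ 0 .
\]
Next I would take $y = z\in X$ in the SIVI inequality \eqref{sIvI}, i.e. $\langle z-F(x^*),x^*\rangle\ge 0$, and again substitute $z = F(x)-\eta H$ to obtain
\[
(x^*)^T\bigl(F(x)-F(x^*)-\eta H\bigr)\ \ge\ 0 .
\]

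Adding these two inequalities and rearranging (using $(x-H)^TH = x^TH-\|H\|^2$) collapses the $F(x^*)$ and $F(x)$ contributions into a single inner product, leaving
\[
\eta(x-x^*)^T H - \eta\|H\|^2\ \ge\ (x-x^*)^T\bigl(F(x)-F(x^*)\bigr) - H^T\bigl(F(x)-F(x^*)\bigr).
\]
Now I would lower-bound the right-hand side: the first term is at least $\mu\|F(x)-F(x^*)\|^2$ by co-coercivity (Assumption \ref{asump1}), while Cauchy--Schwarz and Young's inequality give $H^T(F(x)-F(x^*))\le \mu\|F(x)-F(x^*)\|^2 + \tfrac{1}{4\mu}\|H\|^2$. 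The two $\mu\|F(x)-F(x^*)\|^2$ contributions cancel, so $\eta(x-x^*)^T H - \eta\|H\|^2 \ge -\tfrac{1}{4\mu}\|H\|^2$; dividing by $\eta>0$ yields the claim.

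The computations are routine; the only genuine choices are (i) recognizing that $F(x^*)$ and $z$ are the right feasible points to feed into Lemma \ref{proj}(b) and into \eqref{sIvI}, respectively, and (ii) calibrating the Young's-inequality split with the constant $\mu$ so that the co-coercivity term is exactly absorbed. That calibration, together with keeping the signs straight when the two inequalities are summed, is the main thing to be careful about.
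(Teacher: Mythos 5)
Your proof is correct and follows essentially the same route as the paper's: the same two ingredients (Lemma \ref{proj}(b) at $u=F(x)-\eta x$ tested against $F(x^*)\in X$, and the SIVI inequality \eqref{sIvI} tested at $y=\mathbf{P}_X(F(x)-\eta x)$) are added and the cross term is absorbed by co-coercivity. Your Cauchy--Schwarz/Young step with weight $\mu$ is exactly the paper's completing-the-square identity, so the arguments coincide.
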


\begin{lemma}\label{lem:bound}
Consider iterates $\{x_k\}_{k\geq 0}$ generated by Algorithm \ref{alg1} and suppose Assumptions \ref{asump1} and \ref{assump_error} hold, then the following for any $k\geq 0$:
\begin{align}\label{lemma bound}
    \mathbb E[\|x_{k+1}-x^*\|^2\mid \mathcal F_k]\leq \|x_k-x^*\|^2-\left(1-\tfrac{1}{2\eta_k\mu}\right)\|H(x_k,\eta_k)\|^2+\tfrac{\z{6}\nu^2}{\eta_k^2N_k}+\tfrac{2\nu}{\eta_k\sqrt{N_k}}\|x^*\|.
\end{align}
\end{lemma}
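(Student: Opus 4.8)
The plan is to expand the squared distance using the explicit update rule from Algorithm \ref{alg1}, control the stochastic error via Assumption \ref{assump_error}, and then invoke the deterministic one-step estimate of Lemma \ref{first bound}. Write $\bar G_k \triangleq \tfrac{1}{N_k}\sum_{j=1}^{N_k} G(x_k,\xi_{j,k})$, so that by definition $\bar w_{k,N_k} = \bar G_k - F(x_k)$, and $z_k = \mathbf{P}_X[\bar G_k - \eta_k x_k]$. The update reads $x_{k+1} = x_k - \tfrac{1}{\eta_k}(\bar G_k - z_k)$. First I would introduce the ``exact'' counterpart $\ztilde_k \triangleq \mathbf{P}_X[F(x_k)-\eta_k x_k]$, so that $\eta_k H(x_k,\eta_k) = F(x_k) - \ztilde_k$, and rewrite $\bar G_k - z_k = \big(F(x_k) - \ztilde_k\big) + \bar w_{k,N_k} + (\ztilde_k - z_k)$. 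By Lemma \ref{proj}(a), $\|\ztilde_k - z_k\| = \|\mathbf{P}_X[F(x_k)-\eta_k x_k] - \mathbf{P}_X[\bar G_k - \eta_k x_k]\| \le \|\bar w_{k,N_k}\|$, which lets me absorb the projection discrepancy into an error term of the same order as $\|\bar w_{k,N_k}\|$.

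Next I would expand
\[
\|x_{k+1}-x^*\|^2 = \|x_k - x^*\|^2 - \tfrac{2}{\eta_k}(x_k - x^*)^T(\bar G_k - z_k) + \tfrac{1}{\eta_k^2}\|\bar G_k - z_k\|^2.
\]
For the cross term, I substitute $\bar G_k - z_k = \eta_k H(x_k,\eta_k) + \big(\bar w_{k,N_k} + \ztilde_k - z_k\big)$; the main piece gives $-2(x_k-x^*)^T H(x_k,\eta_k)$, to which Lemma \ref{first bound} applies directly, yielding the bound $-2\big(1 - \tfrac{1}{4\mu\eta_k}\big)\|H(x_k,\eta_k)\|^2$. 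The remaining part of the cross term is $-\tfrac{2}{\eta_k}(x_k-x^*)^T(\bar w_{k,N_k}+\ztilde_k - z_k)$; taking $\mathbb E[\cdot\mid\mathcal F_k]$ kills the $\bar w_{k,N_k}$ contribution because $\mathbb E[\bar w_{k,N_k}\mid\mathcal F_k]=0$, while the $\ztilde_k - z_k$ contribution is bounded in absolute value by $\tfrac{2}{\eta_k}\|x_k - x^*\|\,\|\bar w_{k,N_k}\|$. Here I would need a boundedness surrogate for $\|x_k - x^*\|$; a cleaner route the authors evidently take (judging by the $\|x^*\|$ appearing in \eqref{lemma bound}) is to split $(x_k - x^*)$ differently or to bound the relevant inner product by something involving $\|x^*\|$ only --- so I would instead keep $(x_k - x^*)^T(\ztilde_k - z_k)$ together with part of the quadratic term, or more simply re-derive the cross term as $-\tfrac{2}{\eta_k} x_k^T(\cdots) + \tfrac{2}{\eta_k}(x^*)^T(\cdots)$ and use $F(x^*)\in X$ together with Lemma \ref{proj}(b) to handle the $x_k$ piece, leaving only an $\|x^*\|\,\|\bar w_{k,N_k}\|$ residual. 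For the quadratic term $\tfrac{1}{\eta_k^2}\|\bar G_k - z_k\|^2$, I expand the square of $\eta_k H(x_k,\eta_k) + \bar w_{k,N_k} + (\ztilde_k - z_k)$, obtaining $\|H(x_k,\eta_k)\|^2$ plus cross terms bounded by $\|H(x_k,\eta_k)\|\cdot 2\|\bar w_{k,N_k}\|/\eta_k$ plus $\tfrac{4}{\eta_k^2}\|\bar w_{k,N_k}\|^2$ (using $\|\ztilde_k - z_k\|\le\|\bar w_{k,N_k}\|$). The $\|H(x_k,\eta_k)\|^2$ here combines with the $-2(1-\tfrac{1}{4\mu\eta_k})\|H(x_k,\eta_k)\|^2$ from the cross term to produce exactly the coefficient $-(1-\tfrac{1}{2\eta_k\mu})$ in \eqref{lemma bound}, and the mixed $\|H\|\cdot\|\bar w\|$ terms are disposed of by Young's inequality $2ab\le a^2 + b^2$ (tuned so the $\|H\|^2$ absorption still leaves the stated coefficient) at the cost of additional $\|\bar w_{k,N_k}\|^2/\eta_k^2$ terms.

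Finally I take conditional expectation $\mathbb E[\cdot\mid\mathcal F_k]$ throughout and invoke Assumption \ref{assump_error}: every surviving error term is either $\mathbb E[\|\bar w_{k,N_k}\|^2\mid\mathcal F_k]\le \nu^2/N_k$ (contributing to the $\tfrac{6\nu^2}{\eta_k^2 N_k}$ term once all the $O(\|\bar w\|^2/\eta_k^2)$ pieces are collected) or of the form $\mathbb E[\|\bar w_{k,N_k}\|\mid\mathcal F_k]\le\sqrt{\mathbb E[\|\bar w_{k,N_k}\|^2\mid\mathcal F_k]}\le \nu/\sqrt{N_k}$ by Jensen, multiplied by $\tfrac{2}{\eta_k}\|x^*\|$ (giving the $\tfrac{2\nu}{\eta_k\sqrt{N_k}}\|x^*\|$ term). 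Collecting everything yields \eqref{lemma bound}. The main obstacle I anticipate is the bookkeeping on the cross terms so that the stray $\|H(x_k,\eta_k)\|^2$ contributions are dominated precisely enough to leave the clean coefficient $1-\tfrac{1}{2\eta_k\mu}$ (rather than something messier), and handling the inner product $(x_k-x^*)^T(\ztilde_k - z_k)$ without a prior bound on $\|x_k-x^*\|$ --- this is where the $F(x^*)\in X$ membership and Lemma \ref{proj}(b) must be used to trade the unknown $x_k$ for the constant $\|x^*\|$.
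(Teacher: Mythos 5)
Your overall architecture is the same as the paper's: the decomposition $\bar G_k - z_k = \eta_k H(x_k,\eta_k) + \bar w_{k,N_k} + (\ztilde_k - z_k)$ with $\ztilde_k = \mathbf{P}_X[F(x_k)-\eta_k x_k]$, Lemma \ref{proj}(a) to get $\|\ztilde_k - z_k\|\le\|\bar w_{k,N_k}\|$, Lemma \ref{first bound} for the main cross term, the martingale property to kill the $(x_k-x^*)^T\bar w_{k,N_k}$ piece, and Lemma \ref{proj}(b) to trade the unknown $x_k$ for $\|x^*\|$. There is, however, one step that fails as written: your plan to dispose of the mixed terms $\tfrac{2}{\eta_k}\|H(x_k,\eta_k)\|\,\|\bar w_{k,N_k}\|$ by Young's inequality. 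Any split $2ab\le \epsilon a^2+\epsilon^{-1}b^2$ with $\epsilon>0$ injects $+\epsilon\|H(x_k,\eta_k)\|^2$ into the bound and degrades the coefficient from $1-\tfrac{1}{2\eta_k\mu}$ to $1-\tfrac{1}{2\eta_k\mu}-\epsilon$; there is no tuning that ``still leaves the stated coefficient.'' The paper never bounds these terms by Cauchy--Schwarz at all: $\tfrac{2}{\eta_k}\langle H(x_k,\eta_k),\bar w_{k,N_k}\rangle$ vanishes under $\mathbb E[\cdot\mid\mathcal F_k]$ because $H(x_k,\eta_k)$ is $\mathcal F_k$-measurable, and $\tfrac{2}{\eta_k}\langle H(x_k,\eta_k),\ztilde_k-z_k\rangle$ is never isolated --- it is grouped with $-\tfrac{2}{\eta_k}\langle x_k-x^*,\ztilde_k-z_k\rangle$ so that, with $u_k\triangleq F(x_k)-\eta_k x_k$, one has $\eta_k H(x_k,\eta_k)-\eta_k(x_k-x^*)=(u_k-\mathbf{P}_X[u_k])+\eta_k x^*$. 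After adding and subtracting $\bar w_{k,N_k}$ and $z_k$, the piece $\langle (u_k+\bar w_{k,N_k})-z_k,\ \ztilde_k-z_k\rangle$ is nonpositive by Lemma \ref{proj}(b) applied at the point $\ztilde_k=\mathbf{P}_X[u_k]\in X$ (note: it is this membership that is used here, not $F(x^*)\in X$ as you suggest), the piece $-\|z_k-\ztilde_k\|^2$ is nonpositive, and what survives is exactly $\tfrac{2}{\eta_k^2}\|\bar w_{k,N_k}\|^2+\tfrac{2}{\eta_k}\|x^*\|\,\|\bar w_{k,N_k}\|$.

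With that grouping the bookkeeping closes: the constant $6$ in $\tfrac{6\nu^2}{\eta_k^2 N_k}$ arises as $4+2$ ($4$ from $\tfrac{1}{\eta_k^2}\|\bar w_{k,N_k}+\ztilde_k-z_k\|^2$ via nonexpansiveness, $2$ from the surviving inner product above), and the $\|x^*\|$ term follows by Jensen exactly as you describe. So the missing idea is not a new lemma but the simultaneous grouping of the $H$ and $x_k$ pieces against $\ztilde_k - z_k$ --- the very device you sketch for the $(x_k-x^*)$ part must also absorb the $H$ part, and Young's inequality must be avoided entirely.
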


Now we are ready to present the main results of this paper. 
In Theorem \ref{th:a.s.}, we show that the iterates generated by Algorithm \ref{alg1} converge to a solution of problem \eqref{sIvI} almost surely. 
\begin{theorem}[Almost sure convergence]\label{th:a.s.}
Consider iterates $\{x_k\}_{k\geq 0}$ generated by Algorithm \ref{alg1} and suppose Assumptions \ref{asump1} and \ref{assump_error} hold. Let $\eta_k=\eta$ for some $\eta>0$ and suppose $\{N_k\}_k$ is an increasing sequence, such that $\sum_{k=0}^\infty \tfrac{1}{\sqrt{N_K}}<\infty$. Then $x_k\to x^*$ almost surely. 
\end{theorem}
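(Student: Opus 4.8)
The plan is to apply the Robbins-Siegmund lemma (Lemma \ref{a.s.}) to the per-iteration bound established in Lemma \ref{lem:bound}. Setting $v_k = \|x_k - x^*\|^2$, the bound \eqref{lemma bound} with $\eta_k \equiv \eta$ reads
\begin{align*}
\mathbb E[\|x_{k+1}-x^*\|^2\mid \mathcal F_k]\leq \|x_k-x^*\|^2-\left(1-\tfrac{1}{2\eta\mu}\right)\|H(x_k,\eta)\|^2+\tfrac{6\nu^2}{\eta^2N_k}+\tfrac{2\nu}{\eta\sqrt{N_k}}\|x^*\|,
\end{align*}
which matches the Robbins-Siegmund template with $\alpha_k = 0$, $u_k = \left(1-\tfrac{1}{2\eta\mu}\right)\|H(x_k,\eta)\|^2$, and $\beta_k = \tfrac{6\nu^2}{\eta^2 N_k}+\tfrac{2\nu}{\eta\sqrt{N_k}}\|x^*\|$. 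First I would note that we must require $\eta > \tfrac{1}{2\mu}$ (equivalently $1 - \tfrac{1}{2\eta\mu} > 0$) so that $u_k \geq 0$; this should be stated as a standing hypothesis on $\eta$. Then $\sum_k \alpha_k = 0 < \infty$ trivially, and $\sum_k \beta_k < \infty$ follows because $\sum_k \tfrac{1}{\sqrt{N_k}} < \infty$ by assumption and hence also $\sum_k \tfrac{1}{N_k} < \infty$ (since $N_k \to \infty$, eventually $\tfrac{1}{N_k} \leq \tfrac{1}{\sqrt{N_k}}$). Robbins-Siegmund then yields that $\|x_k - x^*\|^2$ converges almost surely to some nonnegative random variable, and that $\sum_{k=0}^\infty \left(1-\tfrac{1}{2\eta\mu}\right)\|H(x_k,\eta)\|^2 < \infty$ almost surely, hence $\|H(x_k,\eta)\| \to 0$ almost surely.

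The remaining work is to upgrade "$\|x_k - x^*\|$ converges and $H(x_k,\eta) \to 0$" to "$x_k$ converges to a solution." Since $\{x_k\}$ is almost surely bounded (its distance to $x^*$ converges), along any sample path it has a convergent subsequence $x_{k_j} \to \bar x$. The map $x \mapsto H(x,\eta) = \tfrac{1}{\eta}(F(x) - \mathbf{P}_X(F(x)-\eta x))$ is continuous (composition of the continuous $F$ with the nonexpansive, hence continuous, projection), so $H(x_{k_j},\eta) \to H(\bar x, \eta)$; combined with $H(x_k,\eta)\to 0$ this gives $H(\bar x,\eta) = 0$, i.e. $F(\bar x) = \mathbf{P}_X(F(\bar x) - \eta \bar x)$, so by the Proposition $\bar x$ solves \eqref{sIvI}. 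Finally, to conclude the whole sequence converges to $\bar x$ (not just a subsequence), I would invoke the standard argument: the Robbins-Siegmund conclusion holds simultaneously for the solution $x^* = \bar x$ (any solution can play the role of $x^*$ in Lemma \ref{lem:bound}), so $\|x_k - \bar x\|$ converges almost surely; since a subsequence converges to $0$, the limit is $0$, whence $x_k \to \bar x$ almost surely.

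One technical care point: the event on which all of this holds (the full-measure set from Robbins-Siegmund) may a priori depend on which solution we plug in as $x^*$, and the limit point $\bar x$ is itself random. The clean way to handle this is to first fix an arbitrary solution $x^*$ (which exists — I would either assume it or note existence follows from co-coercivity and standard VI theory), apply Robbins-Siegmund with that $x^*$ to get almost sure boundedness and $H(x_k,\eta)\to 0$, extract the random subsequential limit $\bar x$ which is shown to be a solution, and then re-apply Lemma \ref{lem:bound} with $\bar x$ in place of $x^*$ on that same sample path to get $\|x_k - \bar x\|$ convergent. This re-application is legitimate because Lemma \ref{lem:bound} holds for any solution point.

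The main obstacle I anticipate is precisely this last step — passing from subsequential convergence of a bounded sequence with a vanishing residual to convergence of the entire sequence — which requires the "$\liminf$ argument" of re-applying the descent inequality at the candidate limit; everything before that (verifying the Robbins-Siegmund hypotheses, summability of $\beta_k$, continuity of $H$) is routine.
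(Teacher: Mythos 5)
Your proposal is correct and follows essentially the same route as the paper: Robbins--Siegmund applied to Lemma \ref{lem:bound}, extraction of a subsequential limit which is shown to be a solution via continuity of $H(\cdot,\eta)$, and re-application of the descent inequality at that limit point to upgrade subsequential to full convergence. You are in fact slightly more careful than the paper, which states only $\eta>0$ in the theorem yet silently uses $1-\tfrac{1}{2\eta\mu}>0$ in the proof; your explicit requirement $\eta>\tfrac{1}{2\mu}$ is the correct reading of the hypothesis.
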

\begin{proof}
    Consider \eqref{lemma bound}, 
    if $\eta_k=\eta$ for some $\eta>0$ and $\sum_{k=0}^\infty \tfrac{1}{\sqrt{N_K}}<\infty$, the requirements of Lemma \ref{a.s.} are satisfied and we can conclude that: 
    
    (i) $\|x_k-x^*\|^2$ is a convergent sequence. Hence, $\{x_k\}_k$ is a bounded sequence which implies that it has a convergent subsequence $x_{k_q}\to x_\#$ as $q\to \infty$ for some $x_\#\in X$;
    
    (ii) $\sum_{k=0}^\infty \left(1-\tfrac{1}{2\eta_k\mu}\right)\|H(x_k,\eta)\|^2<\infty$ almost surely. Since $\left(1-\tfrac{1}{2\eta_k\mu}\right)>0$, hence we have that $\|H(x_k,\eta)\|^2\to 0$ almost surely. From the definition of $H(x_k,\eta)$, we conclude that $\lim_{k\to \infty}F(x_k)-\mathbf P_X(F(x_k)-\eta x_k)\to 0$ which implies that $\lim_{q\to \infty}F(x_{k_q})-\mathbf P_X(F(x_{k_q})-\eta x_{k_q})\to 0$. 
    
    From (i), we know that $x_{k_q}\to x_\#$ almost surely, so from (ii), we have that $F(x_\#)-\mathbf P_X(F(x_\#)-\eta x_\#)= 0$ which means that $x_\#$ is a solution of \eqref{sIvI}. Now we show that $z_\#$ is the unique limit point. Note that \eqref{lemma bound} is true for any optimal solution, so one can write \eqref{lemma bound} for $x^*=x_\#$. Invoking Lemma \eqref{a.s.} again we conclude that $\|x_k-x_\#\|^2$ is a convergent sequence, i.e., there exists $a\geq 0$ such that $a=\lim_{k\to \infty}\|x_k-x_\#\|^2$. Since ${x_{k_q}}$ is a subsequance of ${x_k}$, then we know that $a=\lim_{k\to \infty}\|x_k-x_\#\|^2=\lim_{q\to \infty} \|x_{k_q}-x_\#\|^2=0$. 
    Therefore, we conclude that $\lim_{k\to \infty} \|x_k-x_\#\|^2=0$ almost surely which means that $x_k\to x_\#$ almost surely.
\end{proof}
Now, in Theorem \ref{th:rate} we demonstrate the convergence rate of the proposed method. 
\begin{theorem}[Convergence rate]\label{th:rate}
Let $\{x_k\}_{k\geq 0}$ be the iterates generated by Algorithm \ref{alg1} and suppose Assumptions \ref{asump1} and \ref{assump_error} hold. Choose $\eta_k = \eta \geq \tfrac{1}{2\mu}$.
Let $N_k= \lceil (k+1)^{2+2\delta}\rceil$ where $\delta > 0 $ for all $k \geq 0$, Then the following holds:
\begin{align*}
 \min_{k\in\{0,\hdots,T-1\}}\mathbb E[\|H(x_k,\eta)\|^2] \leq \tfrac{1}{ T\left(1-\tfrac{1}{2\eta \mu}\right)}[\|x_0-x^*\|^2+\tfrac{\pi^2\nu^2}{\eta^2}+\tfrac{2\nu \|x^*\|}{\eta } (1+\z{\tfrac{1}{\delta}})] = \mathcal O(1/T).
\end{align*}
\end{theorem}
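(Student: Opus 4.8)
The plan is to telescope the one-step inequality from Lemma \ref{lem:bound}. With the constant step-size choice $\eta_k=\eta\ge \tfrac{1}{2\mu}$, the coefficient $c\triangleq 1-\tfrac{1}{2\eta\mu}$ is nonnegative and fixed, so Lemma \ref{lem:bound} reads
\begin{align*}
\mathbb E[\|x_{k+1}-x^*\|^2\mid\mathcal F_k]\le \|x_k-x^*\|^2-c\,\|H(x_k,\eta)\|^2+\tfrac{6\nu^2}{\eta^2 N_k}+\tfrac{2\nu}{\eta\sqrt{N_k}}\|x^*\|.
\end{align*}
First I would take total expectations, rearrange to isolate $c\,\mathbb E[\|H(x_k,\eta)\|^2]$ on the left, and sum over $k=0,\dots,T-1$. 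The $\|x_k-x^*\|^2$ terms telescope, leaving $\mathbb E[\|x_0-x^*\|^2]-\mathbb E[\|x_T-x^*\|^2]\le \|x_0-x^*\|^2$ on the right (dropping the nonnegative final term). This yields
\begin{align*}
c\sum_{k=0}^{T-1}\mathbb E[\|H(x_k,\eta)\|^2]\le \|x_0-x^*\|^2+\tfrac{6\nu^2}{\eta^2}\sum_{k=0}^{T-1}\tfrac{1}{N_k}+\tfrac{2\nu\|x^*\|}{\eta}\sum_{k=0}^{T-1}\tfrac{1}{\sqrt{N_k}}.
\end{align*}

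Next I would bound the two series using the prescribed batch sizes $N_k=\lceil (k+1)^{2+2\delta}\rceil\ge (k+1)^{2+2\delta}$. For the first, $\sum_{k=0}^{T-1}\tfrac{1}{N_k}\le \sum_{k\ge 1} k^{-(2+2\delta)}\le \sum_{k\ge 1}k^{-2}=\tfrac{\pi^2}{6}$, so $\tfrac{6\nu^2}{\eta^2}$ times this is at most $\tfrac{\pi^2\nu^2}{\eta^2}$. For the second, $\tfrac{1}{\sqrt{N_k}}\le (k+1)^{-(1+\delta)}$, and $\sum_{k=0}^{T-1}(k+1)^{-(1+\delta)}=1+\sum_{k\ge 2}k^{-(1+\delta)}\le 1+\int_1^\infty t^{-(1+\delta)}\,dt=1+\tfrac{1}{\delta}$; multiplying by $\tfrac{2\nu\|x^*\|}{\eta}$ gives the last bracketed term. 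Combining, the right-hand side is bounded by $\|x_0-x^*\|^2+\tfrac{\pi^2\nu^2}{\eta^2}+\tfrac{2\nu\|x^*\|}{\eta}(1+\tfrac{1}{\delta})$, a constant independent of $T$.

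Finally, since $\min_{k\in\{0,\dots,T-1\}}\mathbb E[\|H(x_k,\eta)\|^2]\le \tfrac{1}{T}\sum_{k=0}^{T-1}\mathbb E[\|H(x_k,\eta)\|^2]$, dividing the summed inequality by $cT$ gives exactly the claimed bound, and since the bracketed quantity is a fixed constant this is $\mathcal O(1/T)$. There is no serious obstacle here: the argument is a standard telescoping-plus-$p$-series estimate, and the only points requiring a little care are verifying that $c=1-\tfrac{1}{2\eta\mu}\ge 0$ (which is exactly the hypothesis $\eta\ge\tfrac{1}{2\mu}$, and must in fact be strict for the division by $c$ to make sense — I would note that $\eta>\tfrac{1}{2\mu}$ is needed, or interpret the $\eta=\tfrac{1}{2\mu}$ case as a vacuous/infinite bound), the choice of which convergent $p$-series to dominate each sum by, and keeping the constants matching the stated $\pi^2$ and $(1+\tfrac1\delta)$ factors.
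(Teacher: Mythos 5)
Your proof is correct and follows essentially the same route as the paper's: take total expectations of Lemma \ref{lem:bound}, telescope, bound $\sum 1/N_k$ by $\pi^2/6$ and $\sum 1/\sqrt{N_k}$ by $1+1/\delta$, and divide by $T(1-\tfrac{1}{2\eta\mu})$. Your remark that the hypothesis must really be $\eta>\tfrac{1}{2\mu}$ (strict) for the division to be meaningful is well taken --- the paper's own proof silently switches to the strict inequality even though the theorem statement says $\geq$.
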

\begin{proof}
Taking expectations from both sides of \eqref{lemma bound} and use the fact that $\mathbb E[\mathbb E[\|x_{k+1}-x^*\|^2\mid \mathcal F_k]] = \mathbb E[\|x_{k+1}-x^*\|^2]$, we have: 
\begin{align*}
    \mathbb E[\|x_{k+1}-x^*\|^2]\leq \mathbb E[\|x_k-x^*\|^2]-\left(1-\tfrac{1}{2\eta_k\mu}\right)\mathbb E[\|H(x_k,\eta_k)\|^2]+\tfrac{\z{6}\nu^2}{\eta_k^2N_k}+\tfrac{2\nu}{\eta_k\sqrt{N_K}}\|x^*\|,
\end{align*}
Now, by summing over $k= 0, \dots,T-1$ and putting $\eta_k = \eta$ the following holds: 
\begin{align*}
\sum_{k=0}^{T-1}\left(1-\tfrac{1}{2\eta \mu}\right)\mathbb E[\|H(x_k,\eta)\|^2] \leq \|x_0-x^*\|^2+\sum_{k=0}^{T-1}\tfrac{\z{6}\nu^2}{\eta^2N_k}+\sum_{k=0}^{T-1}\tfrac{2\nu}{\eta \sqrt{N_K}}\|x^*\|,
\end{align*}
Now, we can provide a lower-bound for the left-hand-side of the above inequality by $\min_k \mathbb E[\|H(x_k,\eta)\|^2]$, then we have:
\begin{align}\label{bound before deviding}
T\left(1-\tfrac{1}{2\eta \mu}\right) \min_k\mathbb E[\|H(x_k,\eta)\|^2] \leq \|x_0-x^*\|^2+\sum_{k=0}^{T-1}\tfrac{\z{6}\nu^2}{\eta^2N_k}+\sum_{k=0}^{T-1}\tfrac{2\nu}{\eta \sqrt{N_K}}\|x^*\|.
\end{align}
Let $\eta > \tfrac{1}{\z{2}\mu}$, and select $N_k = \lceil (k+1)^{2+2\delta}\rceil$, for some $\delta >0 $. Therefore, one can deduce the following inequalities using simple algebra.
\begin{align*}
&\sum_{k=0}^{T-1}\tfrac{1}{N_k} \leq\sum_{k=0}^{T-1}\tfrac{1}{(k+1)^{2+2\delta}} \leq \tfrac{\pi^2}{6},\\
&\sum_{k=0}^{T-1}\tfrac{1}{\sqrt{N_k}} \leq\sum_{k=0}^{T-1}\tfrac{1}{(k+1)^{1+\delta}} \leq 1+ \sum_{k=1}^{T-1}\tfrac{1}{(k+1)^{1+\delta}} \leq 1+\int_{0}^{T-1} \tfrac{1}{(x+1)^{1+\delta}} \,dx \leq 1+\tfrac{1}{\delta}-\tfrac{1}{T\delta} \leq 1+\tfrac{1}{\delta}.    
\end{align*}
Next, using the above inequalities in \eqref{bound before deviding}, we  obtain 
\begin{align*}
T\left(1-\tfrac{1}{2\eta \mu}\right) \min_k\mathbb E[\|H(x_k,\eta)\|^2] \leq \|x_0-x^*\|^2+\tfrac{\pi^2\nu^2}{\eta^2}+\tfrac{2\nu \|x^*\|}{\eta } (1+\z{\tfrac{1}{\delta}}).
\end{align*}
Finally, dividing both sides of previous inequality by $T\left(1-\tfrac{1}{2\eta \mu}\right)$ leads to 
\begin{align*}
    \min_{k\in\{0,\hdots,T-1\}}\mathbb E[\|H(x_k,\eta)\|^2] \leq \tfrac{1}{ T\left(1-\tfrac{1}{2\eta \mu}\right)}[\|x_0-x^*\|^2+\tfrac{\pi^2\nu^2}{\eta^2}+\tfrac{2\nu \|x^*\|}{\eta } (1+\z{\tfrac{1}{\delta}})] = \mathcal O(1/T).
\end{align*}
\end{proof}

\begin{remark}
In Theorem \ref{th:rate}, we established the convergence rate of the proposed algorithm. In particular, we showed that the proposed method finds an $\epsilon$-gap in $\mathcal O(1/\epsilon)$ iterations. Since at each iteration of the method, $N_k$ samples are required, the total number of samples (oracle complexity) to achieve $\epsilon$-gap is $\sum_{k=0}^{T-1} N_k\geq \int_{0}^{T-1} (x+1)^{2+2\delta} dx=\mathcal O(1/\epsilon^{3+2\delta})$. 
\end{remark}

\section{NUMERICAL EXPERIMENTS}\label{sec:numeric}
In this section, to illustrate the effectiveness of the proposed scheme, we first solve a test problem (Example 1) and then we solve a network equilibrium control problem (Example 2). The simulations are conducted in MATLAB. 

{\bf Example 1.} Consider problem \eqref{sIvI} for $G(x,\xi)=Ax+b(\xi)$, where
$$A = \begin{pmatrix} 5.0 & 2.0 & 1.0 \\ 2.0 & 5.0 & 0.0 \\ 1.0 & 0.0 & 6.0 \end{pmatrix},\quad  b(\xi) = \begin{pmatrix} 0.0\\ -3.0 \\ -5.5\end{pmatrix}+\xi,$$  $\xi\in\mathbb R^3$ is generated randomly from a standard Gaussian distribution, and $X=\{x\in \mathbb R^3\mid x_i\in [-1,10],\ i=1,2,3\}$. It is easy to check that $F(x)=\mathbb E[G(x,\xi)]$ is a co-coercive map and the unique optimal solution of the problem is $x^*=[0,0.4,0.75]^T$. Our proposed method is implemented for 20 replications with the same starting point and the mean of replications are plotted.  Figure \ref{fig1} (left) shows  the convergence of VR-IPG  for different choices of stepsizes when sample size $N_k=(k+1)^{2+2\delta}$ with $\delta=1/2$. 

\begin{figure}[htb]
    \centering
    \includegraphics[scale=0.113]{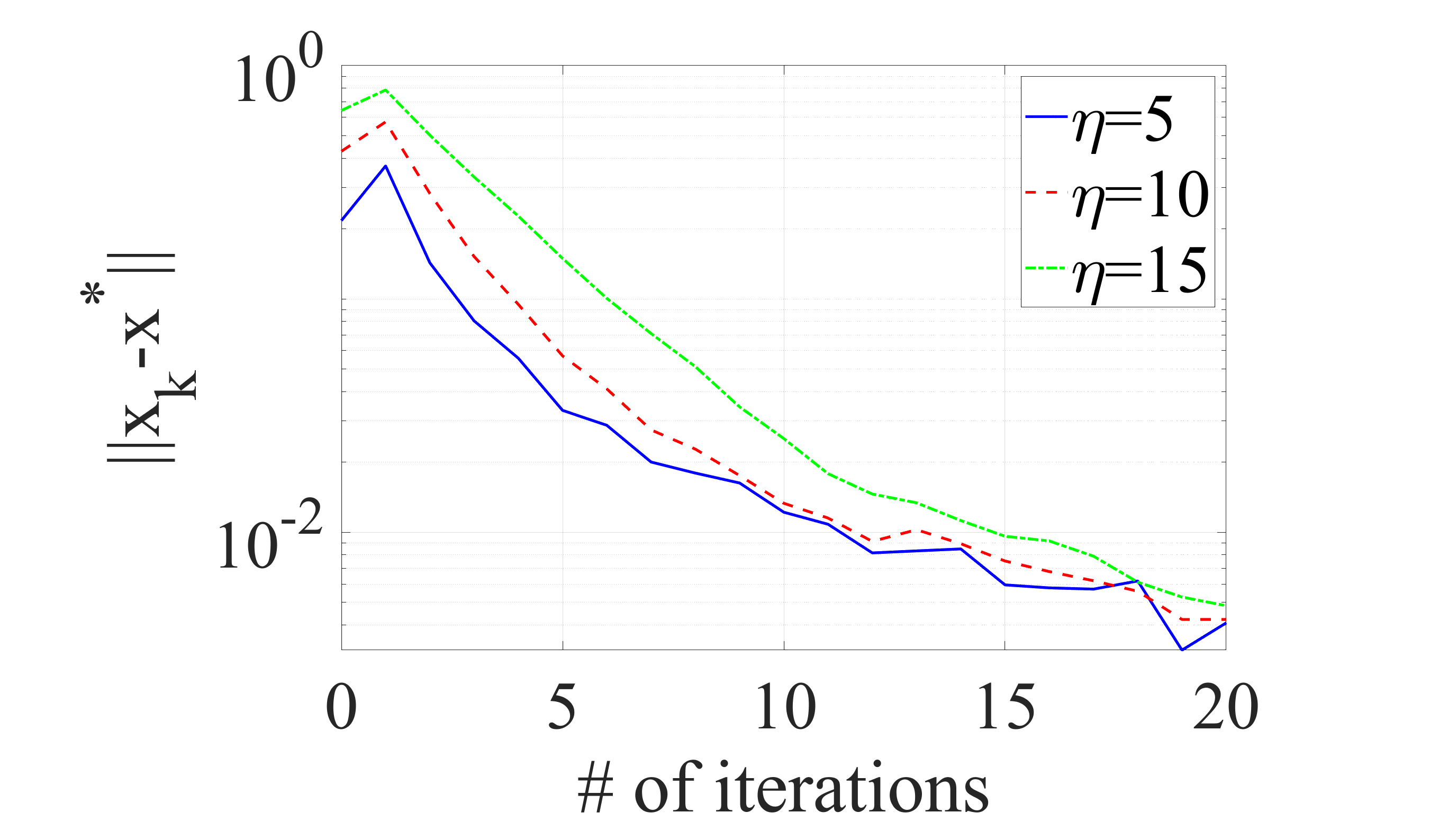}
        \includegraphics[scale=0.113]{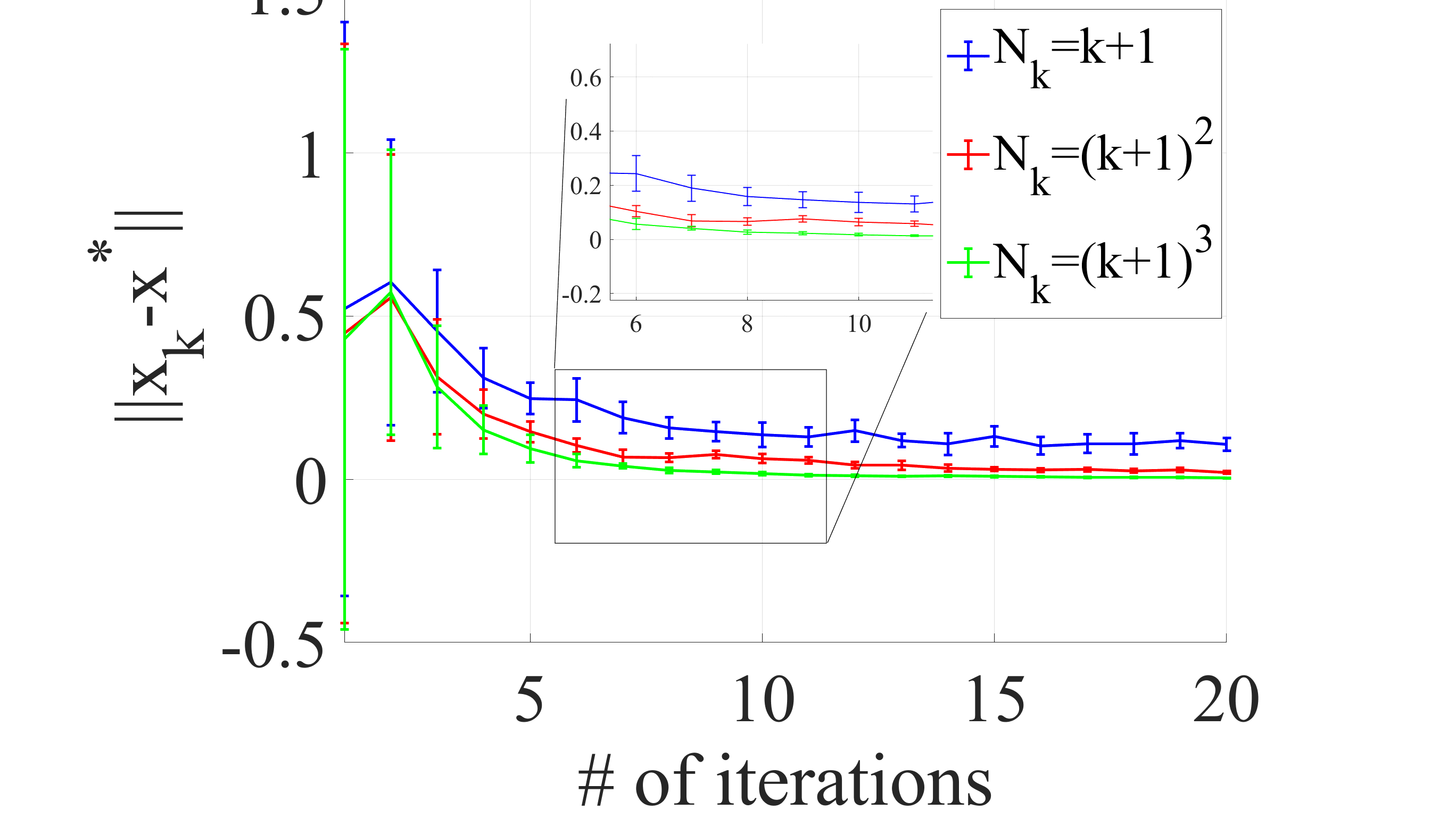}
    \caption{Example 1: Convergence of VR-IPG for different choices of stepsize (left); Almost sure convergence of VR-IPG for different sample sizes. }
    \label{fig1}
\end{figure}
We also implement VR-IPG for different choices of $N_k$ and the mean of 20 replications and their 95\% confidence intervals are plotted in Figure \ref{fig1} (right). From the plots, we observe that the confidence bands are becoming tighter as $k\to \infty$.





{\bf Example 2.} In this example, we consider a network equilibrium control problem  described in \cite{he2010solving}. The problem is motivated by calculating the equilibrium in a supply and demand network. The goal is to control resource exploitations on the supply market and commodity consumptions on the demand markets by regulating taxes as a control variable denoted by $x_1\in\mathbb R^m$ and $x_2\in\mathbb R^n$, respectively. Let $x=[x_1^\top~ x_2^\top]^\top\in\mathbb R^{m+n}$ and the commodity shipment between supply and demand markets are denoted by ${\bf a}=[[a_{ij}]_{i=1}^n]_{j=1}^m\in\mathbb R^{nm}$. The problem can be formulated as an SIVI in \eqref{sIvI} where the constraint set is described by $X = \{ x \in \mathbb R^{m+n} \mid Lx \leq b,~ F^{\min} \leq x \leq F^{\max} \}$ which restricts the total resource consumption for some $L\in \mathbb R^{q\times (m+n)}$,  $b\in \mathbb R^q$, and $q>0$, moreover, the map $F$ is described by a linear model based on an optimal unit of shipments ${\bf a}^*$. Different from \cite{he2010solving}, here we consider a stochastic linear model to include uncertainty. In particular, let $G(x,\xi)=[(A{\bf a}^*(x)+\xi_1)^\top,~(B{\bf a}^*(x)+\xi_2)^\top]^\top$ where $\xi_1\in \mathbb R^m$, $\xi_2\in \mathbb R^n$ are random variables with standard Gaussian distribution, and 
\begin{equation*}
A \triangleq 
\begin{bmatrix}
e_n^T & 0 & \cdots & 0 \\
0 & e_n^T & \cdots & 0 \\
\vdots & \vdots & \ddots & \vdots \\
0 & 0 & \cdots & e_n^T
\end{bmatrix} \in \mathbb R^{m*mn}
, \qquad B \triangleq [I_n,\dots,I_n] \in \mathbb R^{n*mn}.
\end{equation*} 
For a given tax policy $x$, ${\bf a}^*(x)$ can be obtained by solving the following equilibrium problem 
\begin{align*}
   \langle {\bf a}'-{\bf a}, \ell({\bf a})+A^T(g({\bf a})+(\alpha+x_1))-B^T(h({\bf a})-(\beta+x_2)) \rangle \geq 0,\quad \forall {\bf a}' \geq 0.
\end{align*}
where $\alpha\in\mathbb R^m$ is the resource tax imposed on the supply market, $\beta\in\mathbb R^n$ is the sale tax imposed in the demand market, $\ell(x)\triangleq \mbox{diag}(c)x+\tau$, $g({\bf a})\triangleq \mbox{diag}(a)A{\bf a}+a^0$, and $h({\bf a})\triangleq \mbox{diag}(\rho)B{\bf a}+\rho^0$ for some $c\in\mathbb R^{nm}$, $a,a^0\in\mathbb R^m$ and $\rho,\rho^0\in \mathbb R^n$. In this experiment, similar to \cite{he2010solving} we set $m=10$, $n=30$, $q=2$, $F^{\min}=[\mathbf{0}_m^\top~20e_n^\top]^\top$, $F^{\max}=[160e_m^\top~60e_n^\top]^\top$, and other problem's parameter are generated uniformly at random according to Table \ref{tab:param}.

\begin{table}[htb]
\centering
\caption{Distribution of randomly generated problem's parameters.}
\begin{tabular}{|c|c|c|c|c|c|c|}
\hline
Parameter & $c$ & $\tau$ & $a$ & $a^0$ & $\rho$ & $\rho^0$ \\ \hline
Distribution & $U[0.1,0.2]$ & $U[1,2]$ & $U[1,2]$ & $U[270,370]$ & $U[1,2]$ & $U[620,720]$ \\ \hline
\end{tabular}
\label{tab:param}
\end{table}

\z{We execute the code 20 times with the same initial point $x_0 = \mathbf{0}$, and visualize the mean of replications. Figure \ref{fig2} (left) depicts the convergence of VR-IPG for varying step sizes when the sample size $N_k = (k+1)^{2+2\delta}$ and $\delta=1/2$.}
\begin{figure}[htb]
    \centering
    \includegraphics[scale=0.112]{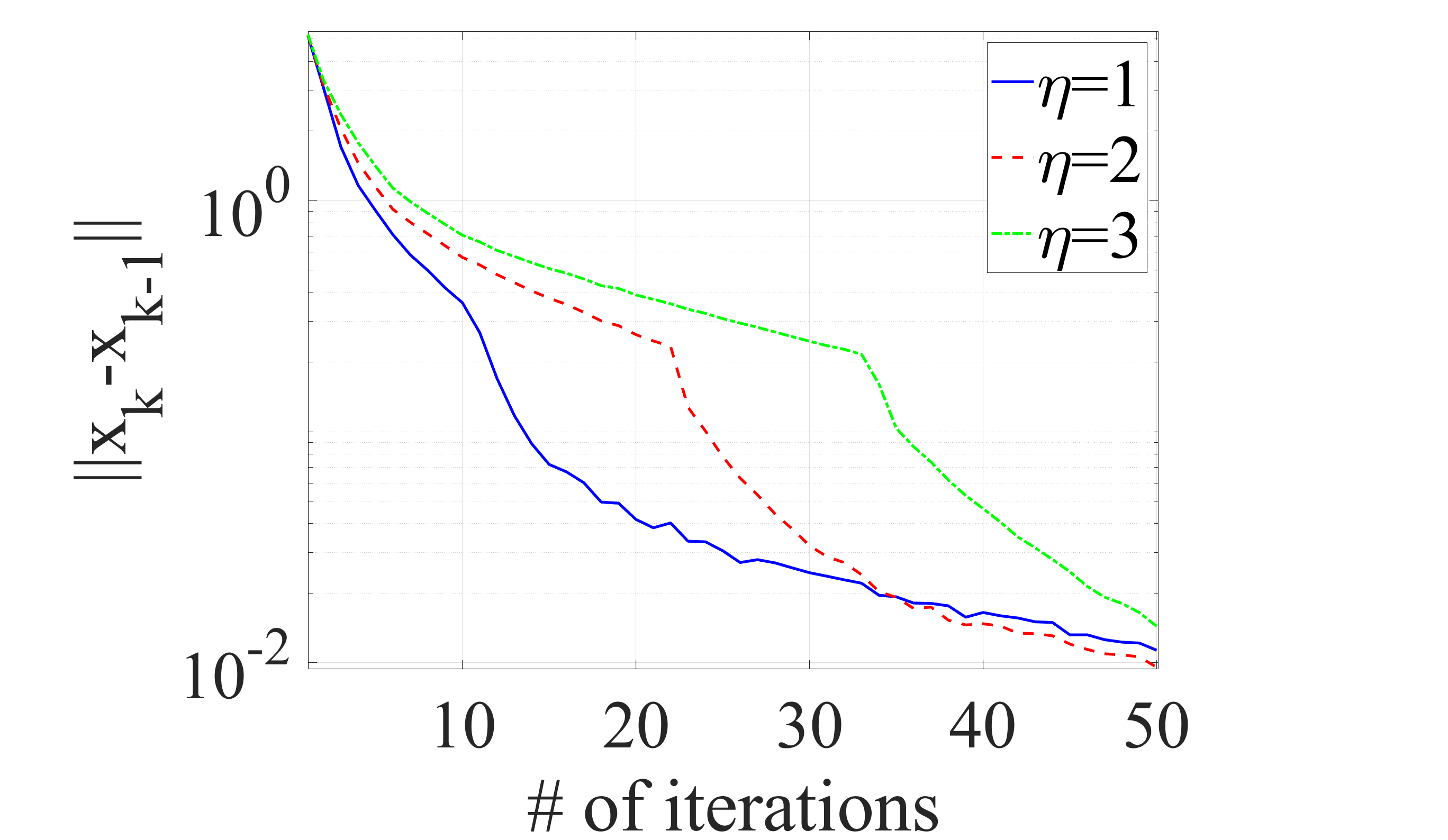}
        \includegraphics[scale=0.114]{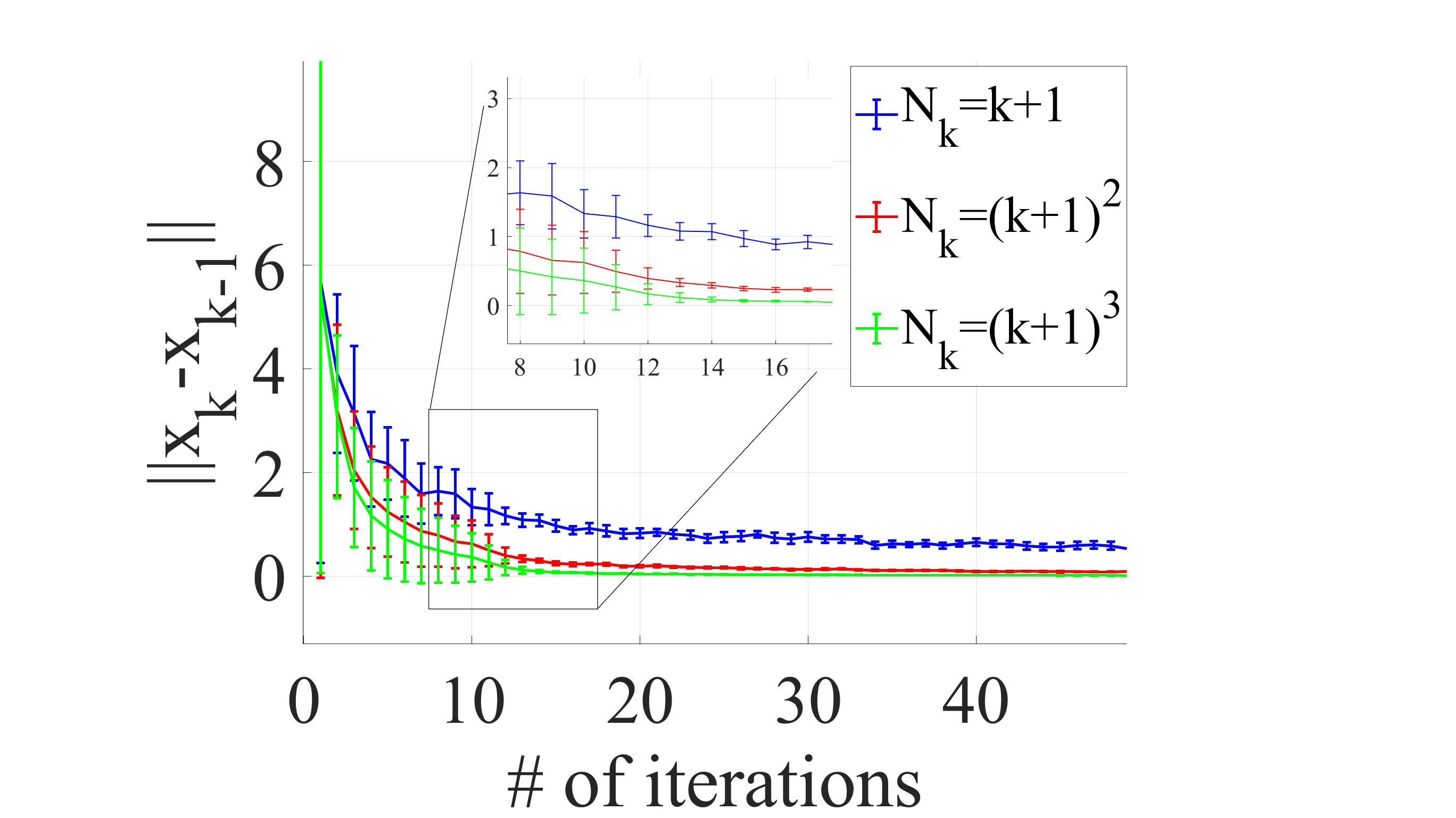}
    \caption{Convergence of VR-IPG implemented on Example 2 for different choices of stepsize (left); Almost sure convergence of VR-IPG for different sample sizes (right). }
    \label{fig2}
\end{figure}
\z{In addition, Figure \ref{fig2} (right) depicts the results of implementing VR-IPG for a variety of $N_k$ values, along with the mean of 20 replications, with stepsize equal to 1 and their respective 95\% confidence intervals. We observe that as $k$ goes to infinity, the confidence intervals become tighter.}


\appendix

\section{APPENDICES}\label{appendix}
In this section, proof of Lemma \ref{first bound} and Lemma \ref{lem:bound} are provided.

{\bf Proof of Lemma \ref{first bound}}.
Since $x^*$ is a solution and $\mathbf{P}_X(F(x)-\eta x) \in X $ by using definition of IVI we have: 
\begin{align}\label{def IVI}
    \langle \eta x^*,\mathbf{P}_X(F(x)-\eta x) - F(x^*)\rangle  \geq 0.
\end{align}
Choosing $u = F(x)-\eta x $ in Lemma \ref{proj}. Since $F(x^*) \in X$ then following holds:
\begin{align} \label{using_lemma1}
    \langle (F(x)-\eta x) - \mathbf{P}_{X}( F(x)-\eta x), \mathbf{P}_{X}( F(x)-\eta x)- F(x^*)\rangle \geq 0.
\end{align}
Combining \eqref{def IVI} and \eqref{using_lemma1}, we get : 
\begin{align*}
\langle \eta H(x,\eta)- \eta (x-x^*), (F(x)-F(x^*))- \eta H(x, \eta)\rangle \geq 0. 
\end{align*}
and consequently
\begin{align*}
(x-x^*)^T H(x,\eta) \geq \| H(x,\eta)\|^2+\tfrac{1}{\eta}(x-x^*)^T(F(x)-F(x^*))-\tfrac{1}{\eta}(F(x)-F(x^*))^T H(x,\eta).
\end{align*}
Applying the co-coercivity of mapping $F$, we obtain:
\begin{align*}
(x-x^*)^T H(x,\eta) &\geq  \| H(x,\eta)\|^2+ \tfrac{\mu}{\eta}\|F(x)-F(x^*)\|^2-\tfrac{1}{\eta}(F(x)-F(x^*))^T H(x,\eta)\\
&=  \| H(x,\eta)\|^2- \tfrac{1}{4 \eta \mu} \| H(x,\eta)\|^2 +\|\sqrt{\tfrac{\mu}{\eta}} (F(x)-F(x^*))- \tfrac{1}{2\sqrt{\mu\eta}} H(x,\eta)\|^2\\
&\geq (1-\tfrac{1}{4 \eta \mu}) \| H(x,\eta)\|^2.\qedd
\end{align*}

{\bf Proof of Lemma \ref{lem:bound}.} 
Recall that $\bar w_{k,N_k}= \tfrac{1}{N_k}{\sum_{j=1}^{N_k} ( G(x_k,\xi_{j,k})-F(x_k) )}$. Using the update rule of $x_{k+1}$ in Algorithm \ref{alg1}, one can obtain the following.
\begin{align*}
    &\|x_{k+1}-x^*\|^2\\&= \|x_k-x^*-\tfrac{1}{\eta_k}\left(F(x_k)+\bar w_{k,N_k}-\mathbf{P}_X\left(F(x_k)+\bar w_{k,N_k}-\eta_kx_k\right)\right)\|^2\\&
    =\|x_k-x^*-\tfrac{1}{\eta_k}\left(F(x_k)-\mathbf{P}_X\left(F(x_k)-\eta_kx_k\right)\right)-\tfrac{1}{\eta_k}\big(\bar w_{k,N_k}-\mathbf{P}_X\left(F(x_k)+\bar w_{k,N_k}-\eta_kx_k\right)\\
    &\quad\mathbf +P_X\left(F(x_k)-\eta_kx_k\right)\big)\|^2. 
    \end{align*}
For notational simplicity, define $e_k\triangleq \bar w_{k,N_k}\z{-}\mathbf{P}_X\left(F(x_k)+\bar w_{k,N_k}-\eta_kx_k\right)\z{+}\mathbf P_X\left(F(x_k)-\eta_kx_k\right)$ and  $H(x_k,\eta_k)\triangleq \tfrac{1}{\eta_k}\left(F(x_k)-\mathbf{P}_X\left(F(x_k)-\eta_kx_k\right)\right)$. Then, using the fact that $(a-b-c)^2=a^2+b^2-2ab+c^2-2ac+2bc$, one can obtain
    \begin{align}\label{bound 1}
\nonumber    &\|x_{k+1}-x^*\|^2\\&
    \leq \|x_k-x^*\|^2+\|H(x_k,\eta_k)\|^2-2\langle x_k-x^*,H(x_k,\eta_k)\rangle+\frac{1}{\eta_k^2}\|e_k\|^2-\tfrac{2}{\eta_k}\langle x_k-x^*,e_k\rangle+\tfrac{2}{\eta_k}\langle H(x_k,\eta_k),e_k\rangle.
    \end{align}
Using the definition of $e_k$ and Lemma \ref{proj}, we have that 
\begin{align}\label{bound e}
    \frac{1}{\eta_k^2}\|e_k\|^2\leq \tfrac{2}{\eta_k^2}\|\bar w_{k,N_k}\|^2+\tfrac{2}{\eta_k^2}\|\mathbf{P}_X\left(F(x_k)+\bar w_{k,N_k}-\eta_kx_k\right)-\mathbf P_X\left(F(x_k)-\eta_kx_k\right)\|^2\leq \tfrac{4}{\eta_k^2}\|\bar w_{k,N_k}\|^2.
\end{align}
Moreover, from definition of $e_k$ and $H(x_k,\eta_k)$, one can show that
\begin{align}\label{bound 2}
\nonumber&\tfrac{2}{\eta_k}\langle H(x_k,\eta_k)-(x_k-x^*),\z{e_k}\rangle\\ \nonumber&=\tfrac{2}{\eta_k}\langle H(x_k,\eta_k)-(x_k-x^*),\bar w_{k,N_k}\rangle\\&\nonumber\quad\z{-}\langle \tfrac{2}{\eta_k}(H(x_k,\eta_k)-(x_k-x^*)),\mathbf{P}_X\left(F(x_k)+\bar w_{k,N_k}-\eta_kx_k\right)-\mathbf P_X\left(F(x_k)-\eta_kx_k\right)\rangle \\&\nonumber
=\tfrac{2}{\eta_k}\langle H(x_k,\eta_k)-(x_k-x^*),\bar w_{k,N_k}\rangle\\ \nonumber&\quad -\langle \tfrac{2}{\eta_k^2} \z{(}(F(x_k)-\eta_kx_k)+\eta_kx^*-\mathbf P_X(F(x_k)-\eta_kx_k) \z{)},\mathbf{P}_X\left(F(x_k)+\bar w_{k,N_k}-\eta_kx_k\right)-\mathbf P_X\left(F(x_k)-\eta_kx_k\right)\rangle, \end{align}
By adding and subtracting $\bar w_{k,N_k}$ and $\mathbf{P}_X\left(F(x_k)+\bar w_{k,N_k}-\eta_kx_k\right)$, we obtain:
\begin{align}
\nonumber&\tfrac{2}{\eta_k}\langle H(x_k,\eta_k)-(x_k-x^*),\z{e_k}\rangle\\ 
\nonumber&
=\tfrac{2}{\eta_k}\langle H(x_k,\eta_k)-(x_k-x^*),\bar w_{k,N_k}\rangle\\
\nonumber&\quad \underbrace{- \tfrac{2}{\eta_k^2} \langle  (F(x_k)\z{+\bar w_{k,N_k}}-\eta_kx_k)-\z{\mathbf{P}_X\left(F(x_k)+\bar w_{k,N_k}-\eta_kx_k\right)}, \mathbf{P}_X\left(F(x_k)+\bar w_{k,N_k}-\eta_kx_k\right)-\mathbf P_X\left(F(x_k)-\eta_kx_k\right)\rangle}_{\text{term (a)}}\\
\nonumber &\quad  -\tfrac{2}{\eta_k^2} \langle  \z{\mathbf{P}_X\left(F(x_k)+\bar w_{k,N_k}-\eta_kx_k\right)}-\mathbf P_X(F(x_k)-\eta_kx_k) ,\mathbf{P}_X\left(F(x_k)+\bar w_{k,N_k}-\eta_kx_k\right)-\mathbf P_X\left(F(x_k)-\eta_kx_k\right)\rangle\\
\nonumber &\quad  +\tfrac{2}{\eta_k^2} \langle  \z{\bar w_{k,N_k}},\mathbf{P}_X\left(F(x_k)+\bar w_{k,N_k}-\eta_kx_k\right)-\mathbf P_X\left(F(x_k)-\eta_kx_k\right)\rangle\\
\nonumber &\quad\z{-}\tfrac{2}{\eta_k}\langle x^*,\mathbf{P}_X\left(F(x_k)+\bar w_{k,N_k}-\eta_kx_k\right)-\mathbf P_X\left(F(x_k)-\eta_kx_k )\right\rangle,
\end{align}
Based on Lemma \ref{proj} part (b), one can easily verify that $\mbox{term (a)}\leq0$,  
then the following can be obtained:
\begin{align}
\nonumber&\tfrac{2}{\eta_k}\langle H(x_k,\eta_k)-(x_k-x^*),\z{e_k}\rangle\\
\nonumber&
\leq \tfrac{2}{\eta_k}\langle H(x_k,\eta_k)-(x_k-x^*),\bar w_{k,N_k}\rangle \z{-}\tfrac{2}{\eta_k}\langle x^*,\mathbf{P}_X\left(F(x_k)+\bar w_{k,N_k}-\eta_kx_k\right)-\mathbf P_X\left(F(x_k)-\eta_kx_k \z{)}\right\rangle\\
\nonumber &\quad - \tfrac{2}{\eta_k^2} \langle  \z{\mathbf{P}_X\left(F(x_k)+\bar w_{k,N_k}-\eta_kx_k\right)}-\mathbf P_X(F(x_k)-\eta_kx_k),
\mathbf{P}_X\left(F(x_k)+\bar w_{k,N_k}-\eta_kx_k\right)-\mathbf P_X\left(F(x_k)-\eta_kx_k\right)\rangle\\
\nonumber & \quad + \tfrac{2}{\eta_k^2} \langle  \bar w_{k,N_k},
\mathbf{P}_X\left(F(x_k)+\bar w_{k,N_k}-\eta_kx_k\right)-\mathbf P_X\left(F(x_k)-\eta_kx_k\right)\rangle\\
\nonumber&
= \tfrac{2}{\eta_k}\langle H(x_k,\eta_k)-(x_k-x^*),\bar w_{k,N_k}\rangle \z{-}\tfrac{2}{\eta_k}\langle x^*,\mathbf{P}_X\left(F(x_k)+\bar w_{k,N_k}-\eta_kx_k\right)-\mathbf P_X\left(F(x_k)-\eta_kx_k \z{)}\right\rangle\\
\nonumber &\quad  \underbrace{- \tfrac{2}{\eta_k^2} \|  \mathbf{P}_X\left(F(x_k)+\bar w_{k,N_k}-\eta_kx_k\right)-\mathbf P_X(F(x_k)-\eta_kx_k)\|^2 }_{\text{term (b)}}\\&\nonumber\quad+ \tfrac{2}{\eta_k^2} \langle  \bar w_{k,N_k},
\mathbf{P}_X\left(F(x_k)+\bar w_{k,N_k}-\eta_kx_k\right)-\mathbf P_X\left(F(x_k)-\eta_kx_k\right)\rangle\\
&\leq \z{\tfrac{2}{\eta_k}\langle H(x_k,\eta_k)-(x_k-x^*),\bar w_{k,N_k}\rangle +}\tfrac{2}{\eta_k}\|x^*\|\|\bar w_{k,N_k}\|\z{+\tfrac{2}{\eta_k ^2}\|\bar w_{k,N_k}\|^2 },
\end{align}
where in the last two inequalities we used the fact that $\text{term (b)} \leq 0$, Lemma \ref{proj} and Cauchy-Schwartz inequality. Using Lemma \ref{first bound}, inequalities \eqref{bound e} and \eqref{bound 2} in \eqref{bound 1}, we obtain
\begin{align*}
\|x_{k+1}-x^*\|^2&\leq \|x_k-x^*\|^2-\left(1-\tfrac{1}{2\eta_k\mu}\right)\|H(x_k,\eta_k)\|^2+\z{\tfrac{2}{\eta_k}\langle H(x_k,\eta_k)-(x_k-x^*),\bar w_{k,N_k}\rangle }\\&\nonumber\quad+\tfrac{\z{6}}{\eta_k^2}\|\bar w_{k,N_k}\|^2+\tfrac{2}{\eta_k}\|x^*\|\|\bar w_{k,N_k}\|.
\end{align*}
 Taking conditional expectation and using Assumption \ref{assump_error}, we have the following.
\begin{align*}
    \mathbb E[\|x_{k+1}-x^*\|^2\mid \mathcal F_k]\leq \|x_k-x^*\|^2-\left(1-\tfrac{1}{2\eta_k\mu}\right)\|H(x_k,\eta_k)\|^2+\tfrac{\z{6}\nu^2}{\eta_k^2N_k}+\tfrac{2\nu}{\eta_k\sqrt{N_K}}\|x^*\|.
    \qedd
\end{align*}

\bibliographystyle{siam}
\bibliography{demobib}

\end{document}